\newtheorem{proposition}{Proposition}[section]
\newtheorem{lemma}[proposition]{Lemma}
\newtheorem{corollary}[proposition]{Corollary}
\newtheorem{theorem}[proposition]{Theorem}
\theoremstyle{definition}
\newtheorem{definition}[proposition]{Definition}
\newtheorem{example}[proposition]{Example}
\newtheorem{remark}[proposition]{Remark}
\newtheorem{remarks}[proposition]{Remarks}
\newcommand{\thlabel}[1]{\label{th:#1}}
\newcommand{\thref}[1]{Theorem~\ref{th:#1}}
\newcommand{\selabel}[1]{\label{se:#1}}
\newcommand{\lelabel}[1]{\label{le:#1}}
\newcommand{\leref}[1]{Lemma~\ref{le:#1}}
\newcommand{\prlabel}[1]{\label{pr:#1}}
\newcommand{\prref}[1]{Proposition~\ref{pr:#1}}
\newcommand{\colabel}[1]{\label{co:#1}}
\newcommand{\coref}[1]{Corollary~\ref{co:#1}}
\newcommand{\relabel}[1]{\label{re:#1}}
\newcommand{\exlabel}[1]{\label{ex:#1}}
\newcommand{\exref}[1]{Example~\ref{ex:#1}}
\newcommand{\delabel}[1]{\label{de:#1}}
\newcommand{\deref}[1]{Definition~\ref{de:#1}}
\newcommand{\eqlabel}[1]{\label{eq:#1}}
\newcommand{\equref}[1]{(\ref{eq:#1})}
\newcommand{\End}{{\rm End}}
\def\RR{{\mathbb R}}
\def\CC{{\mathbb C}}
\newcommand{\Cc}{\mathcal{C}}
\def\*C{{}^*\hspace*{-1pt}{\Cc}}
\def\text#1{{\rm {\rm #1}}}
\begin{document}

\title[On the structure and classification of Bernstein algebras]
{On the structure and classification of Bernstein algebras}

\author{G. Militaru}
\address{Faculty of Mathematics and Computer Science, University of Bucharest, Str.
Academiei 14, RO-010014 Bucharest 1, Romania and Simion Stoilow Institute of Mathematics of the Romanian Academy, P.O. Box 1-764, 014700 Bucharest, Romania}
\email{gigel.militaru@fmi.unibuc.ro and gigel.militaru@gmail.com}
\subjclass[2010]{16T10, 16T05, 16S40}

\thanks{This work was supported by a grant of the Ministry of Research,
Innovation and Digitization, CNCS/CCCDI--UEFISCDI, project number
PN-III-P4-ID-PCE-2020-0458, within PNCDI III}

\subjclass[2020]{17A60, 17A30, 17D92} \keywords{Non-associative algebra, Bernstein algebra, automorphisms group.}

\begin{abstract} Linear algebra tools are used to give a new approach to the open problem of the classification of Bernstein algebras. We prove that any Bernstein
algebra $(A, \omega)$ is isomorphic to a semidirect product $N \ltimes_{(\cdot, \, \Omega)} \, k$ associated to a commutative algebra $(N, \cdot)$
such that $(x^2)^2 = 0$, for all $x\in N$ and an idempotent endomorphism $\Omega = \Omega^2 \in {\rm End}_k (N)$ of $N$ satisfying two compatibility conditions. The set of types of $(1 + |I|)$-dimensional Bernstein algebras is parametrized by an explicitely constructed classified object. The automorphisms group of any Bernstein algebra is described as a subgroup of the canonical semidirect product of groups $(N, +) \ltimes {\rm GL}_k (N)$.
\end{abstract}

\maketitle

\section*{Introduction}
One of the pioneers of using mathematics to model genetics was Bernstein \cite{bern1, bern2, bern3} who formulated what is currently known as the \emph{Bernstein problem}, which consists in classifying all possible situations of the population genetics satisfying the \emph{Stationarity Principle}, i.e. conditions required to ensure that a population attains equilibrium after one generation (\cite{Lj0}, \cite[Section 4]{reed}, \cite[Chapter 9]{WB}). Later on, Etherington \cite{Eth} and Schafer \cite{Sch} pointed out the important role of non-associative algebras in modelling genetics (for an up-dated survey see \cite{reed}) introducing and studying a number of non-associative algebras
that arise from genetics such as baric, (special) train, gametic, zygotic or genetic algebras. Based on these ideas, Lyubich \cite{Lj} and Holgate \cite{Hol} restarted the study of the Bernstein problem and introduced the concept of \emph{Bernstein algebra} as a commutative algebra $A$ over a field $k$ of characteristic $\neq 2$ such that there exists a non-zero morphism of algebras $\omega: A \to k$ such that $(a^2)^2 = \omega (a)^2 \, a^2$, for all $a \in A$. This was a turning point which generated an explosion of interest in the study of this new class of algebras. With this concept in hand, Bernstein's problem can be generalized as:

\emph{For a given positive integer $n$, describle and classify, up to an isomorphism, all Bernstein algebras of dimension $n$.}

Bernstein algebras of dimension $\leq 3$ over the complex numbers $\CC$ have been classified by Holgate \cite[Section 4]{Hol} (see also \cite[Theorem 9.20]{WB}), and those of dimension $4$, also over the field $\CC$, by Lyubich \cite{Lj01} and over an arbitrary field of characteristic $\neq 2$ by Cort\'{e}s \cite{corte}. The Bernstein algebras of dimension $5$, over an algebraically closed field of characteristic $\neq 2$, were classified in \cite{cortem} only for two classes of algebras: (a) the reduced ones or (b) Bernstein-Jordan algebras. To the best of our knowledge these are also the only complete classifications of Bernstein algebras, with one notable exception, namely the class of so-called \emph{simplicial stochastic} Bernstein algebras, which are completely
classified by Guti\'{e}rrez-Fern\'{a}ndez \cite{Fer1}: these are Bernstein algebras, defined over the field $\RR$, that admit a stochastic basis (for details see \cite{Fer1, Fer2}). However, even over the field $\RR$, there exist Bernstein algebras that do not admit a stochastic basis \cite{gonz}, thus the classification of all Bernstein algebras over an arbitrary field remains an open and tempting problem.

Most structure or classification results obtained so far for Bernstein algebras have intensively used their Peirce
decomposition: any Bernstein algebra $(A, \omega)$ has a non-zero idempotent $e$ and thus $A = k e \oplus U_e \oplus Z_e$,
where $U_e := \{x \in A \, |\,  2 e x = x \}$ and $Z_e := \{x \in A \, |\, e x = 0 \}$. This paper deals with Bernstein algebras following a different view-point
inspired by \cite[Section 2]{gonz2}: we shall not use the Peirce decomposition at all and instead we shall use only linear algebra tools and new constructions similar to those used for other classes of algebras \cite{am-2019, am-2016, am-2015, am-2013}. The first main result is \thref{structura} that gives the structure of Bernstein algebras (in particular for normal ones): any Bernstein algebra $(A, \omega)$ of dimension $1 + |I|$ is isomorphic to the algebra having $\{f, \, e_i \, | \, i\in I\}$ as a basis and the multiplication $\circ$ given for any $i$, $j\in I$ by:
\begin{equation}\eqlabel{ooo}
f^2 := f, \qquad e_i \circ e_j := e_i \cdot e_j, \qquad e_i \circ f = f \circ e_i := \frac{1}{2} \, \Omega (e_i)
\end{equation}
where $\cdot$ is a comutative algebra stucture on $N : = \oplus_{i\in I} \, k \, e_i$ satisfying the compatibiliy $(x^2)^2 = 0$, for all $x\in N$ (such an algebra we called it
a \emph{$4$-algebra}) and $\Omega = \Omega^2 \in {\End}_k (N)$ is an idempotent endomorphism of $N$ such that
\begin{equation*} \eqlabel{berdat0}
x^2 \cdot \Omega (x) = 0, \qquad \Omega(x)^2 + \Omega(x^2) = x^2
\end{equation*}
for all $x\in N$. Such an idempotent $\Omega$ we called it a \emph{Bernstein operator} on the $4$-algebra $(N, \cdot)$.
The weight of the algebra defined by \equref{ooo} is given by $\omega (f) := 1$ and $\omega (e_i) := 0$, for all $i\in I$. \thref{calssific} describes the set of all morphisms between two such Berstain algebras: in particular,
we give necessary and sufficient conditions for two Bernstein algebras to be isomorphic. Based on this, the automorphism group of any Bernstein algebra is described explicitly as a subgroup of the canonical semidirect product of groups $(N, +) \ltimes {\rm GL}_k (N)$ in \coref{autodesc}. The second main result is \thref{clasifgen}: the set of types of isomorphism of all Bernstein algebras of dimension $1 + |I|$ is explicitly described and parameterized. The results of this paper provided a new and efficient method of classification of finite dimensional Bernstein algebras, different from the one existing in the literature \cite{corte, Hol, Lj01, WB}: several illustrative examples are given and a list of open problems arising from our approach is proposed. For instance, applying our approach and the classical Jordan theory we prove that there exist precisely $n+1$ types of isomorphism of Bernstein algebras of dimension $n+1$ whose bar-ideal is an abelian algebra and their automorphisms groups are explicitly described.

\section{Preliminaries}\selabel{preli}
During this paper $k$ will be an infinite field of characteristic $\neq 2$ and all vector spaces, (bi)linear maps are taken over $k$.
For a family of sets $(A_i)_{i\in I}$ we shall denote by $\amalg_{i\in I} \, A_i$ their
coproduct, i.e. $\amalg_{i\in I} \, A_i$ is the disjoint union of all sets $A_i$. If $V$ and $W$ are two vector spaces, ${\rm Hom}_k (V, \, W)$ denote the  vector space of all linear maps $V \to W$; ${\rm End}_k \, (V)$ is the usual associative and unital endomorphisms algebra of $V$ and ${\rm GL}_k (V)$ is the automorphisms group of $V$. We denote by $(V, +) \ltimes {\rm GL}_k (V) := V \times {\rm GL}_k (V)$ the canonical semidirect product of groups having the multiplication given for any $(w, g)$ and $(v, f) \in V \times {\rm GL}_k (V)$ by:
\begin{equation}\eqlabel{semidirect}
(w, \, g) \bullet (v, \,  f) := (w + g(v), \, g\circ f).
\end{equation}

By an algebra $A = (A, \cdot)$, we mean a vector space $A$ with a bilinear map, called multiplication, $ \cdot : A \times A \to A$. The concepts of subalgebras, ideals, morphisms of algebras are defined in the standard way: for basic facts on non-associative algebras we refere to \cite{Zh2}. An algebra $A$ is called commutative if $a\cdot b = b\cdot a$, for all $a$, $b\in A$ and $A$ is called \emph{abelian} or \emph{zero algebra} if $a \cdot b = 0$, for all $a$, $b\in A$. A \emph{baric algebra} is a pair $(A, \omega)$ consisting of a commutative algebra $A$ with a non-zero morphism of algebras $\omega: A \to k$, called \emph{weight}. A baric algebra $(A, \omega)$ is called a \emph{Bernstein algebra} if:
\begin{equation}\eqlabel{Bern}
(a^2)^2 = \omega (a)^2 \, a^2
\end{equation}
for all $a\in A$. If $ A =(A, \omega)$ is a Bernstein algebra, then the weight $\omega: A \to k$ is unique \cite[Lemma 9.3]{WB} and ${\rm Ker} (\omega)$ is called the bar-ideal of $A$. A baric algebra $(A, \omega)$ is called a \emph{normal Bernstein algebra} \cite{Lj1} if:
\begin{equation} \eqlabel{Lj}
a^2 \cdot b = \omega (a) \, a \cdot b
\end{equation}
for all $a$, $b\in A$. Any normal Bernstein algebra is a Bernstein algebra and moreover, is a special train algebra, in particular a genetic algebra \cite[Theorem 9.16]{WB}. The relation between Bernstein algebras and Jordan algebras was studied in \cite{Lj0, WB1}: in particular, \cite[Theorem 7]{WB1} proves that any normal Bernstein algebra is a Jordan algebra. A morphism between two Bernstein algebras $(A, \omega_A)$ and $(B, \omega_B)$ is just a morphism of baric algebras, that is an algebra map $\psi: A \to B$ such that $\omega_B \circ \psi = \omega_A$. By ${\rm Aut}_{\rm Ber} (A)$ we denote the automorphism group of a Bernstein algebra $A$. For other basic results on Bernstein algebras we refer  to \cite{PZ, WB} and their references.

\section{The structure and classification of Bernstein algebras}\selabel{structure}

The following concepts will play the key role in this paper.

\begin{definition} \delabel{fouralg}
A \emph{$4$-algebra} is a commutative algebra $N = (N, \cdot)$ such that $(x^2)^2 = 0$, for all $x\in N$. A \emph{Bernstein operator} on a $4$-algebra
$(N, \cdot)$ is an idempotent $\Omega = \Omega^2 \in {\rm End}_k (N)$ satisfying the following compatibilities for any $x\in N$:
\begin{equation} \eqlabel{berdat}
x^2 \cdot \Omega (x) = 0, \qquad \Omega(x)^2 + \Omega(x^2) = x^2.
\end{equation}
The set of all Bernstein operators on $(N, \cdot)$ will be denoted by ${\mathcal BO} \, (N, \cdot)$. A \emph{normal Bernstein operator} on a $4$-algebra $(N, \cdot)$ is an idempotent $\Omega = \Omega^2 \in {\rm End}_k (N)$ such that for any $x$, $y\in N$:
\begin{equation} \eqlabel{norberdat}
\Omega (x^2) = 0, \qquad \Omega(x) \cdot y = x \cdot y.
\end{equation}
The set of all normal Bernstein operators on $(N, \cdot)$ will be denoted by ${\mathcal NBO} \, (N, \cdot)$.
\end{definition}

\begin{remarks} \relabel{4nor}
(1) The variety of $4$-algebras were introduced and studied in \cite{guzzo} where it was proved that any $4$-algebra of dimension $\leq 7$ is solvable and it
was conjectured that any finite dimensional $4$-algebra is solvable. We also mention that $4$-algebras are a special case of \emph{admissible cubic algebras} introduced and studied in \cite{el} in relation with Jordan algebras: these are commutative algebras $A$ satisfying the compatibility $(a^2)^2 = \Upsilon (a) \, a$, for all $a \in A$, where $\Upsilon: A \to k$ is a cubic form. Algebras satisfying similar conditions have often been studied: for instance, associative algebras satisfying the identity $x^5 = 0$ were studied in \cite{She} related to Kuzmin's conjecture on the index of nilpotency. If $(A, \omega)$ is a Bernstein algebra, then its bar-ideal ${\rm Ker} (\omega)$ is a $4$-algebra.

(2) We can easily prove that any normal Bernstein operator is a Bernstein operator, i.e. ${\mathcal NBO} \, (N, \cdot) \subseteq {\mathcal BO} \, (N, \cdot)$. The trivial idempotents $0$ and ${\rm Id}_N$ are Bernstein operators on a $4$-algebra $(N, \cdot)$ if and only if $(N, \cdot)$ is the abelian algebra. Indeed, if $0$ or ${\rm Id}_N$ are Bernstein operators on $N$ we obtain from the second compatibility condition of \equref{berdat} that $x^2 = 0$, for all $x\in N$: linearizing this condition we obtain that $x \cdot y = 0$, for all $x$, $y\in N$. Several examples of Bernstein operators are given at the end of the paper.

(3) Linearizing the second compatibility of \equref{berdat} we obtain that an idempotent $\Omega \in {\rm End}_k (N)$ is a Bernstein operator on a $4$-algebra $N$ if and only if for any $x$, $y\in N$:
\begin{equation*}
x^2 \cdot \Omega (x) = 0, \qquad \Omega(x) \cdot \Omega(y) + \Omega(x \cdot y) = x\cdot y.
\end{equation*}

\end{remarks}

The terminology of \deref{fouralg} is motivated by the following:

\begin{proposition} \prlabel{prop1}
Let $N$ be a vector space, $\cdot : N \times N \to N$ a bilinear map and $\Omega \in {\rm End}_k (N)$ a linear endomorphism of $N$. We denote
$N \ltimes_{(\cdot, \, \Omega)} k := N \times k$, as a vector space, with the multiplication $\circ$ given for any $x$, $y \in N$ and $\alpha$, $\beta \in k$ by:
\begin{equation} \eqlabel{inmultirea}
(x, \, \alpha) \circ (y, \, \beta) := (x \cdot y + \frac{1}{2} \alpha \, \Omega (y) + \frac{1}{2} \beta \, \Omega (x), \,\, \alpha\beta).
\end{equation}
Then $N \ltimes_{(\cdot, \, \Omega)} k$ is a (normal) Bernstein algebra if and only if $(N, \cdot)$ is a $4$-algebra and $\Omega$ is a
(normal) Bernstein operator on $(N, \cdot)$. The weight of $N \ltimes_{(\cdot, \, \Omega)} k$ is the canonical projection $\pi_2 : N \ltimes_{(\cdot, \, \Omega)} k \to k$,
$\pi_2 (x, \, \alpha) = \alpha$, for all $x\in N$, $\alpha\in k$.

The Bernstein algebra $N \ltimes_{(\cdot, \, \Omega)} k$ will be called the \emph{semidirect product} of $N$ and $k$ associated to $(\cdot, \, \Omega)$.
\end{proposition}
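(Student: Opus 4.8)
The plan is to verify the defining axioms directly, reducing each to a polynomial identity in the scalar variables and then exploiting the hypothesis that $k$ is infinite. First I would record the two observations that hold unconditionally. The projection $\pi_2\colon V\ltimes_{(\cdot,\,\Omega)}k\to k$ is a surjective, hence nonzero, algebra map by the second component of \equref{inmultirea}; and the product $\circ$ is commutative if and only if $\cdot$ is, because the mixed term $\frac12\alpha\,\Omega(y)+\frac12\beta\,\Omega(x)$ and the second component $\alpha\beta$ in \equref{inmultirea} are already symmetric under interchanging $(x,\alpha)$ and $(y,\beta)$. Thus $V\ltimes_{(\cdot,\,\Omega)}k$ is a baric algebra with weight $\pi_2$ exactly when $(V,\cdot)$ is commutative, and it remains only to analyse the identities \equref{Bern} and \equref{Lj}.

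For the main equivalence I would first compute the square of a general element $a=(x,\alpha)$, obtaining $a^2=(x^2+\alpha\,\Omega(x),\,\alpha^2)$, and then expand both sides of \equref{Bern}. The second components agree automatically ($\alpha^4=\alpha^4$), so the entire content lies in the first component, which after using commutativity of $\cdot$ reads
\begin{equation*}
(x^2)^2+2\alpha\,\bigl(x^2\cdot\Omega(x)\bigr)+\alpha^2\bigl(\Omega(x)^2+\Omega(x^2)\bigr)+\alpha^3\,\Omega^2(x)=\alpha^2\,x^2+\alpha^3\,\Omega(x).
\end{equation*}
Moving everything to one side, this asserts that for each fixed $x\in V$ a polynomial of degree $\le 3$ in $\alpha$, with coefficients in $V$, vanishes for every $\alpha\in k$. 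This is the step I regard as the heart of the argument: since $k$ is infinite, such a polynomial must be identically zero, so each coefficient vanishes separately. The coefficients of $\alpha^0,\alpha^1,\alpha^2,\alpha^3$ yield respectively $(x^2)^2=0$, $2\,x^2\cdot\Omega(x)=0$, $\Omega(x)^2+\Omega(x^2)=x^2$ and $\Omega^2(x)=\Omega(x)$; because ${\rm char}\,k\neq 2$ the second is precisely $x^2\cdot\Omega(x)=0$. These four statements are exactly the $4$-algebra condition together with the idempotency and the two compatibilities \equref{berdat} of a Bernstein operator, and the converse implication is the same computation read backwards. This settles the non-normal case.

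For the normal variant I would run the identical strategy on \equref{Lj}, now computing $a^2\circ b$ and $\omega(a)\,(a\circ b)$ for $a=(x,\alpha)$, $b=(y,\beta)$. After cancelling the common term $\frac12\alpha^2\,\Omega(y)$, the first-component identity becomes a polynomial in the two variables $\alpha,\beta$, and equating the coefficients of the monomials $1,\alpha,\beta,\alpha\beta$ gives $x^2\cdot y=0$, $\Omega(x)\cdot y=x\cdot y$, $\Omega(x^2)=0$ and $\Omega^2(x)=\Omega(x)$. The last three are exactly the idempotency and the defining relations \equref{norberdat} of a normal Bernstein operator; the relation $x^2\cdot y=0$ is redundant, since it follows from $\Omega(x^2)=0$ and $\Omega(z)\cdot y=z\cdot y$ applied to $z=x^2$, while setting $y=x^2$ in it recovers the $4$-algebra condition $(x^2)^2=0$ for free. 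Thus both variants are proved in parallel, the only genuinely delicate point being the legitimacy of comparing coefficients, which rests squarely on $k$ being infinite.
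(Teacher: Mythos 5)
Your proof is correct and follows essentially the same route as the paper's: square a general element, expand \equref{Bern} (resp.\ \equref{Lj}) into a polynomial identity in $\alpha$ (resp.\ in $\alpha, \beta$) with coefficients in $V$, compare coefficients to extract the four conditions, and note the redundancy of $x^2 \cdot y = 0$ in the normal case --- and you correctly pinpoint that coefficient comparison rests on $k$ being infinite, which is stated in the paper's preliminaries though its proof cites only ${\rm char}\, k \neq 2$ at that step. The only point you elide is the paper's appeal to the uniqueness of the weight (\cite[Lemma 9.3]{WB}) to justify testing the Bernstein identity against the specific weight $\pi_2$ rather than an arbitrary one; since the proposition fixes $\pi_2$ as the weight and $\pi_2$ is a nonzero algebra map, this is a harmless omission.
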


\begin{proof} First we observe that the multiplication on $N \times k$ given by \equref{inmultirea} is commutative if and only if $\cdot: N\times N \to N$ is a commutative algebra. Moreover, the canonical projection $\pi_2 : N \ltimes_{(\cdot, \, \Omega)} k \to k$, $\pi_2 (x, \, \alpha) = \alpha$ is an algebra map. Assume that $\cdot$ is commutative. Using the uniqueness of the weight of a Bernstein algebra \cite[Lemma 9.3]{WB} we obtain that $(N \ltimes_{(\cdot, \, \Omega)} k , \, \circ)$ is a Bernstein algebra if and only if
\begin{equation} \eqlabel{inmber}
\bigl((x, \, \alpha)^2\bigl)^2 = \pi_2 (x, \, \alpha)^2 (x, \, \alpha)^2
\end{equation}
for all $x \in N$ and $\alpha \in k$. Taking into account that $(x, \, \alpha)^2 = (x^2 + \alpha \, \Omega (x), \,\, \alpha^2)$ we obtain that \equref{inmber} holds if and only if
\begin{eqnarray*}
(x^2)^2 + 2 \, \alpha x^2 \, \cdot \Omega(x) + \alpha^2 \, \Omega(x)^2 + \alpha^2 \, \Omega(x^2) + \alpha^3 \, \Omega^2(x) = \alpha^2 \,  x^2 + \alpha^3 \, \Omega(x)
\end{eqnarray*}
for any $x\in N$ and $\alpha \in k$. Since $k$ is a field of characteristic $\neq 2$, the above identity holds for any $\alpha \in k$ if and only if
$$(x^2)^2 = 0, \qquad x^2 \cdot \Omega (x) = 0, \qquad \Omega(x)^2 + \Omega(x^2) = x^2, \qquad \Omega^2 = \Omega$$
for all $x\in N$. Thus, we have proved that $N \ltimes_{(\cdot, \, \Omega)} k$ is a Bernstein algebra if and only if $(N, \cdot)$ is a $4$-algebra and
$\Omega$ is a Bernstein operator on $(N, \cdot)$.

The case of normal Berstain algebra is proven analogously. Indeed, the compatibility condition:
\begin{equation} \eqlabel{Ljb}
(x, \, \alpha)^2 \cdot (y, \, \beta) = \pi_2 (x, \, \alpha) \,\, (x, \, \alpha) \cdot (y, \, \beta)
\end{equation}
holds for all $x$, $y\in N$ and $\alpha$, $\beta \in k$ if and only if
\begin{eqnarray*}
x^2 \cdot y + \alpha \, \Omega(x) \cdot y + \frac{1}{2} \beta \, \Omega(x^2) + \frac{1}{2} \alpha \beta \, \Omega^2 (x) = \alpha \, x\cdot y + \frac{1}{2}\alpha \beta \, \Omega(x)
\end{eqnarray*}
This identity holds for any $\alpha$, $\beta \in k$ if and only if
\begin{eqnarray*}
x^2 \cdot y = 0, \qquad \Omega(x^2) = 0, \qquad \Omega(x) \cdot y = x\cdot y, \qquad \Omega^2 = \Omega
\end{eqnarray*}
for all $x$, $y\in N$. Now, the first compatibility condition follows from the second and the third since $x^2 \cdot y = \Omega(x^2) \cdot y = 0$. Thus,
$N \ltimes_{(\cdot, \, \Omega)} k$ is a normal Bernstein algebra if and only if $(N, \cdot)$ is a $4$-algebra and
$\Omega$ is a normal Bernstein operator on $(N, \cdot)$ and the proof is finished.
\end{proof}

Let $\Omega \in {\mathcal BO}\, (N, \cdot)$ be a Bernstein operator on a $4$-algebra $(N, \cdot)$ having $\{e_i \, | \, i\in I\}$ as a basis. In the vector space $N\times \, k$ we identify $e_i = (e_i, \, 0)$ and we denote $f := (0, 1)$. Then the semidirect product $(N \ltimes_{(\cdot, \, \Omega)} \, k, \, \circ)$ is the Bernstein algebra
having $\{f, \, e_i \, | \, i\in I\}$ as a basis and the multiplication $\circ$ is given for any $i$, $j\in I$ by:
\begin{equation}\eqlabel{detaliat}
f^2 := f, \qquad e_i \circ e_j := e_i \cdot e_j, \qquad e_i \circ f = f \circ e_i := \frac{1}{2} \, \Omega (e_i).
\end{equation}
The weight of $(N \ltimes_{(\cdot, \, \Omega)} \, k, \, \circ)$ is given by $\omega (f) := 1$ and $\omega (e_i) := 0$, for all $i\in I$. In \thref{structura} we will prove that any  Bernstein algebra is isomorphic to such an algebra.

\begin{example} \exlabel{exberdatum}
Let $N$ be a vector space and assume that the bilinear map $\cdot$ is trivial, i.e. $x\cdot y = 0$, for all $x$, $y\in N$.
Then $\Omega$ is a (normal) Bernstein operator on $N$ if and only if $\Omega^2 = \Omega$, i.e. ${\mathcal BO} \, (N, \cdot) = {\mathcal NBO} \, (N, \cdot)
= {\rm Idem} \bigl(\rm End_k (N)\bigl)$. The associated Bernstein algebra $N \ltimes_{(\cdot = 0, \, \Omega)} \, k$
will be denoted by $\mathcal{B}_0 \, (N, \, \Omega)$ and was called in \cite{WB2} \emph{trivial Bernstein algebra} associated to an arbitrary idempotent
endomorphism $\Omega =\Omega^2$ of $N$. In particular, the Bernstein algebras associated to the trivial idempotents $0$ and ${\rm Id}_N$, namely
$\mathcal{B}_0 \, (N, \, \Omega := 0)$  (resp. $\mathcal{B}_0 \, (N, \, \Omega := {\rm Id}_N)$) were called the \emph{constant Bernstein algebra}
(resp. the \emph{unit Bernstein algebra}) \cite{Lj1}.
\end{example}

The proof of the following lemma is straightforward, but we need it in the proof of \thref{structura}:

\begin{lemma} \lelabel{transf}
Let $(A, \omega_A)$ be a (normal) Bernstein algebra and $(B, \omega_B)$ a baric algebra. If $\psi: B \to A$ is an isomorphism of baric algebras, then $(B, \omega_B)$ is a
(normal) Bernstein algebra.
\end{lemma}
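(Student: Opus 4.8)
The plan is to transport the defining identity across the isomorphism. Since $\psi: B \to A$ is an isomorphism of baric algebras, it is in particular a bijective algebra map satisfying $\omega_A \circ \psi = \omega_B$; recall also that, being a baric algebra, $B$ is already commutative, so only the Bernstein identity \equref{Bern} (resp. the normality condition \equref{Lj}) remains to be checked.

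First I would fix $b \in B$ and set $a := \psi(b) \in A$. Applying $\psi$ to $(b^2)^2$ and using repeatedly that $\psi$ preserves products gives $\psi\bigl((b^2)^2\bigr) = \bigl(\psi(b)^2\bigr)^2 = (a^2)^2$. Since $A$ is a Bernstein algebra, the right-hand side equals $\omega_A(a)^2\, a^2$. Now $\omega_A(a) = \omega_A(\psi(b)) = \omega_B(b)$ by weight compatibility, while $a^2 = \psi(b)^2 = \psi(b^2)$, so linearity of $\psi$ yields $\omega_A(a)^2\, a^2 = \omega_B(b)^2\, \psi(b^2) = \psi\bigl(\omega_B(b)^2\, b^2\bigr)$. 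Combining the two computations gives $\psi\bigl((b^2)^2\bigr) = \psi\bigl(\omega_B(b)^2\, b^2\bigr)$, and cancelling the injective map $\psi$ delivers $(b^2)^2 = \omega_B(b)^2\, b^2$, which is exactly \equref{Bern} for $(B, \omega_B)$.

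The normal case is entirely analogous: for $b$, $c \in B$ one applies $\psi$ to $b^2 \cdot c$, uses that $\psi$ preserves products to rewrite it as $\psi(b)^2 \cdot \psi(c)$, invokes \equref{Lj} in $A$ together with $\omega_A(\psi(b)) = \omega_B(b)$ to obtain $\psi\bigl(\omega_B(b)\, b\cdot c\bigr)$, and cancels $\psi$ to conclude $b^2 \cdot c = \omega_B(b)\, b\cdot c$.

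As to difficulty, there is essentially no obstacle here: the argument is a pure transport of structure. The only points requiring any care are the bookkeeping of scalars, which is handled by the linearity of $\psi$, and the final cancellation step, which is legitimate precisely because $\psi$ is assumed to be an isomorphism and hence injective. No use is made of finite-dimensionality or of any structural feature of $A$ beyond the identities \equref{Bern} and \equref{Lj} themselves.
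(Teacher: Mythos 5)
Your proof is correct, and it is precisely the straightforward transport-of-structure argument the paper has in mind: the paper omits the proof entirely, remarking only that it is straightforward, and your verification of \equref{Bern} (resp.\ \equref{Lj}) by applying $\psi$, using multiplicativity and $\omega_A \circ \psi = \omega_B$, and cancelling the injective $\psi$ is exactly the intended argument. Your observation that commutativity of $B$ comes for free from the definition of a baric algebra, so only the Bernstein (resp.\ normality) identity needs checking, is also the right bookkeeping.
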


\begin{theorem} \thlabel{structura}
Let $A = (A, \, \cdot_A, \, \omega)$ be a (normal) Bernstein algebra and $N := {\rm Ker} (\omega)$. Then there exists a $4$-algebra structure on $N = (N, \cdot)$, a (normal) Bernstein operator $\Omega$ on $(N, \cdot)$ and an isomorphism of Bernstein algebras $\psi: N \ltimes_{(\cdot, \, \Omega)} \, k \to A$ that stabilize $N$, i.e. $\psi (x, \, 0) = x$, for all $x \in N$.
\end{theorem}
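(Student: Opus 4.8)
The plan is to manufacture a weight-one idempotent of $A$, split $A$ as $ke \oplus V$ using it, and read off the $4$-algebra product $\cdot$ and the operator $\Omega$ directly from $\cdot_A$; the defining axioms of \deref{fouralg} will then be harvested from \leref{transf} and \prref{prop1} rather than checked by hand. First I would build the idempotent: since $\omega$ is non-zero it is surjective, so choose $a \in A$ with $\omega(a) = 1$ and set $e := a^2$. The Bernstein identity \equref{Bern} yields $e^2 = (a^2)^2 = \omega(a)^2\, a^2 = a^2 = e$ and $\omega(e) = \omega(a)^2 = 1$, whence $A = ke \oplus V$ with every $a\in A$ written uniquely as $a = \omega(a)\, e + \bigl(a - \omega(a)\, e\bigr)$, the second summand lying in $V$. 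Moreover $V = \Ker(\omega)$ is an ideal, because $\omega(x \cdot_A b) = \omega(x)\omega(b) = 0$ for $x\in V$, $b\in A$; in particular $x \cdot_A y$ and $e \cdot_A x$ lie in $V$ for all $x, y \in V$.

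Next I would fix the candidate data. Let $\cdot$ be the restriction of $\cdot_A$ to $V \times V$ (valued in $V$ by the previous step), and define $\Omega \in \End_k(V)$ by $\Omega(x) := 2\, e \cdot_A x$, which is linear and, again since $V$ is an ideal, valued in $V$. Finally set $\psi : V \times k \to A$, $\psi(x, \alpha) := x + \alpha e$. The direct-sum decomposition makes $\psi$ a linear isomorphism with inverse $a \mapsto (a - \omega(a) e, \, \omega(a))$; it satisfies $\psi(x, 0) = x$, so it stabilizes $V$, and $\omega\bigl(\psi(x, \alpha)\bigr) = \alpha = \pi_2(x, \alpha)$, so it is compatible with weights.

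It then remains to check that $\psi$ is multiplicative for the product \equref{inmultirea} built from these $\cdot$ and $\Omega$. Expanding in $A$ and using $e^2 = e$, commutativity of $\cdot_A$, and $e \cdot_A x = \tfrac{1}{2}\Omega(x)$ gives
\[
\psi(x, \alpha) \cdot_A \psi(y, \beta) = (x + \alpha e)\cdot_A (y + \beta e) = x\cdot y + \frac{1}{2}\, \beta\, \Omega(x) + \frac{1}{2}\, \alpha\, \Omega(y) + \alpha\beta\, e,
\]
which is precisely $\psi$ applied to $(x, \alpha) \circ (y, \beta)$ as defined in \equref{inmultirea}. Since $\cdot$ is commutative, $(V \times k, \circ)$ is a commutative algebra with weight $\pi_2$, hence a baric algebra, and $\psi$ is an isomorphism of baric algebras onto the Bernstein algebra $A$.

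To conclude I would invoke the two earlier results. By \leref{transf}, transporting the (normal) Bernstein structure of $A$ across the baric isomorphism $\psi$ shows $(V \times k, \circ)$ is itself a (normal) Bernstein algebra, so the \emph{only if} direction of \prref{prop1} forces $(V, \cdot)$ to be a $4$-algebra and $\Omega$ to be a (normal) Bernstein operator; thus $V \ltimes_{(\cdot, \Omega)} k = (V\times k, \circ)$ and $\psi$ is the desired weight-preserving, $V$-stabilizing isomorphism, the normal case running in parallel via the normal clauses of both statements. I expect the only genuine content to be the extraction of the weight-one idempotent from \equref{Bern} together with the correct guess $\Omega(x) = 2\, e \cdot_A x$; the decisive economy is that the $4$-algebra and Bernstein-operator identities are never verified directly but are delivered automatically by \prref{prop1}.
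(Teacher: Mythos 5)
Your proposal is correct and follows essentially the same route as the paper: extract the weight-one idempotent $e = a^2$ from \equref{Bern}, define $\cdot$ as the restriction of $\cdot_A$ to $V = \Ker(\omega)$ and $\Omega(x) = 2\, e \cdot_A x$, verify that $\psi(x,\alpha) = x + \alpha e$ is a baric isomorphism intertwining $\cdot_A$ with the multiplication \equref{inmultirea}, and then harvest the $4$-algebra and Bernstein-operator axioms from \leref{transf} together with the \emph{only if} direction of \prref{prop1} rather than verifying them by hand. Your multiplicativity check via $\psi\bigl((x,\alpha)\circ(y,\beta)\bigr) = \psi(x,\alpha)\cdot_A\psi(y,\beta)$ is the same computation the paper performs through $\psi^{-1}\bigl(\psi(x,\alpha)\cdot_A\psi(y,\beta)\bigr)$, so there is no substantive difference.
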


\begin{proof}
Since $\omega \neq 0$ we can find $x\in A$ such that $\omega (x) = 1$. Then, using \equref{Bern} we obtain that $e : = x^2$ is an idempotent of $A$ and
$\omega (e) = 1$. Fix such an element $e\in A$. Then, we can easily see that the linear map $\psi : N
\times k \to A$, $\psi (x, \, \alpha) := x + \alpha \, e $ is an isomorphism of
vector spaces with the inverse $\psi^{-1} (y) = \bigl(y - \omega (y)\, e, \,\, \omega(y) \bigl)$, for all $y\in A$. We define the following two bilinear maps:
\begin{eqnarray*}
\cdot \, &:& N \times N \to N,
\,\,\,\,\, x \cdot y := x \cdot_A y \eqlabel{act2}\\
\Omega \, &:& N \to N, \,\,\,\,\,\,\,\,\, \Omega(x) := 2 \, e \cdot_A x \eqlabel{coc1}
\end{eqnarray*}
for all $x$, $y \in N$. They are well defined (i.e. $x \cdot y \in N$ and $\Omega(x) \in N$) since $\omega$ is an algebra map. As $A$ is a Bernstein algebra (resp. a normal Bernstein algebra) we will prove that $\cdot$ is a $4$-algebra structure on $N$, $\Omega$ is a Bernstein operator (resp. a normal Bernstein operator) on $(N, \cdot)$ and
$\psi: N \ltimes_{(\cdot, \, \Omega)} \, k \to A$ is an isomorphism of Bernstein algebras. Instead of proving the compatibility conditions \equref{berdat} (resp. \equref{norberdat}), we use the following trick combined with \leref{transf} and \prref{prop1}: $\psi : N \times k \to A$, $\psi (x, \, \alpha) := x + \alpha \, e $ is an isomorphism of vector spaces. Thus, there exists a unique
algebra structure on $N \times k$ such that $\psi$ is an isomorphism of non-associative algebras. This unique multiplication $\ast$ on $N \times k$ is given for any
$x$, $y \in N$ and $\alpha$, $\beta \in k$ by:
\begin{eqnarray*}
(x, \alpha) \ast (y, \beta) &=& \psi^{-1} \bigl(\psi(x, \alpha) \cdot_A
\psi(y, \beta)\bigl) = \psi^{-1} \bigl(  ( x + \alpha \, e ) \cdot_A (y + \beta \, e) \bigl) \\
&=& \psi^{-1} \bigl( x \cdot_A y + \alpha \, e \cdot_A y + \beta \, x \cdot_A e + \alpha\beta \, e^2 \bigl) \\
&=& \psi^{-1} \bigl( x \cdot y + \alpha \, e \cdot_A y + \beta \, e \cdot_A x + \alpha\beta \,  e \bigl) \\
&=& \bigl( x \cdot y + \alpha \, e \cdot_A y + \beta \, e \cdot_A x, \, \alpha\beta \bigl) \\
&=& \bigl( x \cdot y + \frac{1}{2} \alpha \, \Omega (y) + \frac{1}{2} \beta \, \Omega (x), \,\, \alpha\beta \bigl)
\end{eqnarray*}
This shows that the multiplication $\ast$ on $N\times k$ coincides with the one defined by
\equref{inmultirea}. We also observe that $\psi$ stabilize $N$ (that is $\psi (x, \, 0) = x$, for all $x \in N$) and $\psi$ is an isomorphism of
baric algebras since $(\omega \circ \psi) (x, \alpha) = \omega (x + \alpha \, e) = \alpha = \pi_2 (x, \alpha)$. Applying now \leref{transf}
and \prref{prop1} we obtain that $\psi: N \ltimes_{(\cdot, \, \Omega)} \, k \to A$ is an isomorphism of Bernstein algebras (resp. normal Bernstein algebras) and the proof is now finished.
\end{proof}

Based on \thref{structura} the classification of all (normal) Bernstein algebras is reduced to the classification of all semidirect products $N \ltimes_{(\cdot, \, \Omega)} \,k$.  The key step for this purpose is the folllowing:

\begin{theorem} \thlabel{calssific}
Let $(N, \cdot)$, $(M, \cdot')$ be two $4$-algebras, $\Omega \in {\mathcal BO}\, (N, \cdot)$ and $\Omega' \in {\mathcal BO}\, (M, \cdot')$ two Bernstein operators. Then there exists a bijection between the set of all morphisms of Bernstein algebras $\psi: N \ltimes_{(\cdot, \, \Omega)} \, k \to M \ltimes_{(\cdot', \, \Omega')} \, k$
and the set of all pairs $(v_0, \, f) \in M \times {\rm Hom}_k (N, \, M)$ satisfying the following compatibilities:
\begin{eqnarray}
f(x\cdot y) &=& f(x) \cdot' f(y) \eqlabel{Mor1}\\
f\bigl(\Omega (x)\bigl) - \Omega'\bigl(f(x)\bigl)  &=& 2 \, f(x) \cdot' v_0 \eqlabel{Mor2}\\
\Omega' (v_0) &=& v_0 - v_0 \cdot' v_0 \eqlabel{Mor3}
\end{eqnarray}
for all $x$, $y\in N$. Under the above bijection the Bernstein algebras map $\psi =
\psi_{(v_0, f)} : N \ltimes_{(\cdot, \Omega)} k \to M \ltimes_{(\cdot', \Omega')} k$ corresponding to $(v_0, \, f) \in M \times {\rm Hom}_k (N, \, M)$
is given for any $x\in N$ and $\alpha \in k$ by:
\begin{equation}\eqlabel{morfis}
\psi (x, \, \alpha) = \bigl (f(x) + \alpha\, v_0, \,\, \alpha\bigl)
\end{equation}
Moreover, $\psi = \psi_{(v_0, \, f)}: N \ltimes_{(\cdot, \, \Omega)} \, k \to M \ltimes_{(\cdot', \, \Omega')} \, k$ is an isomorphism of Bernstein algebras if and only if $f: (N, \cdot) \to (M, \cdot')$ is an isomorphism of $4$-algebras.
\end{theorem}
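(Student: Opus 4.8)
The plan is to pin down the most general \emph{weight-preserving} linear map between the two semidirect products and then convert the multiplicativity requirement into the three stated compatibilities by comparing coefficients of a polynomial identity in the two scalar variables.

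First I would start from an arbitrary morphism of Bernstein algebras $\psi: V \ltimes_{(\cdot, \Omega)} k \to W \ltimes_{(\cdot', \Omega')} k$. By definition $\psi$ preserves the weights, which by \prref{prop1} are the second projections; hence the second coordinate of $\psi(x, \alpha)$ must equal $\alpha$. Writing $\psi(x, \alpha) = (g(x, \alpha), \alpha)$ with $g: V \times k \to W$ linear and splitting $g(x, \alpha) = g(x, 0) + \alpha\, g(0, 1)$ forces exactly the shape of \equref{morfis}, with $f := g(-, 0) \in \Hom_k(V, W)$ and $v_0 := g(0, 1) \in W$. Thus every candidate morphism is automatically packaged as a pair $(v_0, f)$.

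Next I would impose that $\psi_{(v_0, f)}$ is an algebra map. Using the multiplication \equref{inmultirea} on each side, I would expand $\psi\bigl((x, \alpha) \circ (y, \beta)\bigr)$ and $\psi(x, \alpha) \circ' \psi(y, \beta)$ as elements of $W$ (their second coordinates $\alpha\beta$ agree automatically). Both are polynomials in $\alpha, \beta$ of degree at most one in each variable, so they coincide for all $\alpha, \beta \in k$ if and only if their coefficients of $1$, $\alpha$, $\beta$ and $\alpha\beta$ agree; this coefficient comparison is licensed precisely because $k$ is infinite. The constant term gives $f(x \cdot y) = f(x) \cdot' f(y)$, namely \equref{Mor1}; the coefficients of $\alpha$ and of $\beta$ each yield, after clearing the factor $\tfrac12$ and using commutativity of $\cdot'$, the relation \equref{Mor2}; and the $\alpha\beta$-coefficient gives $v_0 = v_0 \cdot' v_0 + \Omega'(v_0)$, which is \equref{Mor3}. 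Reading the same computation backwards shows that any pair satisfying \equref{Mor1}--\equref{Mor3} turns \equref{morfis} into a weight-preserving algebra map, so $(v_0, f) \mapsto \psi_{(v_0, f)}$ is the asserted bijection.

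For the final claim I would use that $\psi_{(v_0, f)}$ is ``block-triangular'': its $k$-coordinate is the identity and its $W$-coordinate is $f$ shifted by $\alpha v_0$. Hence $\psi$ is injective (resp.\ surjective) exactly when $f$ is, so $\psi$ is bijective if and only if $f$ is. If $\psi$ is an isomorphism of Bernstein algebras, then $f$ is a bijective algebra map by \equref{Mor1}, hence an isomorphism of $4$-algebras; conversely, when $f$ is an isomorphism of $4$-algebras the already-established morphism $\psi$ is bijective, and its inverse is again weight-preserving since the $k$-coordinate is untouched, so $\psi$ is an isomorphism of Bernstein algebras. The argument has no deep obstacle, only bookkeeping; the single point I would flag explicitly is that the coefficient comparison in the middle step rests on $k$ being infinite, without which the polynomial identity need not force equality of coefficients and the characterization could break down.
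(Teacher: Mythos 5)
Your proof is correct and takes essentially the same route as the paper's: the weight condition forces the shape $\psi(x, \alpha) = \bigl(f(x) + \alpha\, v_0, \, \alpha\bigl)$ with $f(x) := \psi_1(x,0)$ and $v_0 := \psi_1(0,1)$, multiplicativity is then converted into \equref{Mor1}--\equref{Mor3} by comparing the coefficients of $1$, $\alpha$, $\beta$ and $\alpha\beta$, and the isomorphism criterion follows from the triangular shape of $\psi$ exactly as in the paper, which simply records the explicit inverse $\psi_{(v_0,f)}^{-1}(x,\alpha) = \bigl(f^{-1}(x) - \alpha\, f^{-1}(v_0), \, \alpha\bigl)$. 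The one caveat you flag is misplaced: since the identity has degree at most one in each of $\alpha$ and $\beta$, evaluating at $\alpha, \beta \in \{0,1\}$ already yields all four coefficient equations, so this step works over any field and does not rest on $k$ being infinite --- the infinite-field hypothesis is genuinely used rather for the cubic identity in $\alpha$ in the proof of \prref{prop1}.
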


\begin{proof}
A linear map $\psi: N  \times \, k \to M \times \, k$ is uniquely determined by two
linear maps $\psi_1: N\times k \to M$ and $\psi_2: N\times k \to k $ such that $\psi (x, \alpha) = \bigl(\psi_1 (x, \alpha), \, \psi_2 (x, \alpha) \bigl)$,
for all $(x, \, \alpha) \in N\times k$. Such a map satisfies the baric condition $\pi_2 \circ \psi = \pi_2$ if and only if $\psi_2 (x, \alpha) = \alpha$,
for all $(x, \, \alpha) \in N\times k$. Let $f: N \to M$, $f (x) := \psi_1 (x, 0)$ and $v_0 := \psi_1 (0, 1) \in M$. Then, $\psi_1(x, \, \alpha) = f(x) + \alpha v_0$ and
thus any linear map $\psi : N  \times \, k \to M \times \, k$ is uniquely determined by a pair $(v_0, \, f) \in M \times {\rm Hom}_k (N, \, M)$ via the
formula \equref{morfis}. Now, by a straightforward computation we can prove that such a map
$\psi = \psi_{(v_0, f)} : N \ltimes_{(\cdot, \Omega)} \, k \to M \ltimes_{(\cdot', \Omega')} \, k$ is an algebra map if and only if the following condition holds for any $x$, $y\in N$ and $\alpha$, $\beta\in k$:
\begin{eqnarray*}
&& f(x \cdot y) + \frac{1}{2} \alpha \, f(\Omega(y)) + \frac{1}{2} \beta \, f(\Omega(x)) + \alpha\beta \, v_0 = f(x) \cdot' f(y) + \\
&& + \beta \, f(x) \cdot' v_0 + \alpha \, f(y) \cdot' v_0 + \alpha\beta \, v_0 \cdot' v_0 + \frac{1}{2} \alpha \, \Omega' (f(y)) + \frac{1}{2} \beta \, \Omega' (f(x)) + \alpha\beta \, \Omega'(v_0).
\end{eqnarray*}
The above compatibility holds for any $\alpha$, $\beta \in k$ if and only if the three compatibilities \equref{Mor1}-\equref{Mor3} hold. The last statement is elementary: we point out that if $f$ is bijective, the inverse of $\psi_{(v_0, f)}$ is given by $\psi_{(v_0, f)} ^{-1} (x, \, \alpha) = \bigl(f^{-1}(x) - \alpha f^{-1} (v_0), \, \alpha \bigl)$, for all $x\in M$ and $\alpha \in k$.
\end{proof}

\subsection*{Applications: the automophisms group and the classification of Bernstein algebras}

As a first application of the previous results we can explicitely describle the automophisms group of a given Bernstein algebra. Let $(N, \cdot)$ be a $4$-algebra,
$\Omega \in {\mathcal BO} \, (N, \cdot)$ a Bernstein operator on $(N, \cdot)$ and denote by ${\rm Aut}_{\rm Alg} (N, \cdot)$ the automophism group of the
commutative algebra $(N, \cdot)$. Let $\mathcal{A} \, (N, \cdot, \Omega)$ be the set of all pairs $(v, \, f) \in N \times {\rm Aut}_{\rm Alg} (N, \cdot)$ such that:
\begin{equation}\eqlabel{auto}
\Omega (v) = v - v^2, \quad f\bigl(\Omega(x)\bigl) - \Omega \bigl(f (x)\bigl) = 2\, f(x) \cdot v
\end{equation}
for all $x\in N$. Then, we can easily prove that $\mathcal{A} \, (N, \cdot, \Omega)$ has a group structure via the multiplication given by:
\begin{equation}\eqlabel{autoinmu}
(w, \, g) \bullet (v, \,  f) := (w + g(v), \, g\circ f)
\end{equation}
for all $(w, g)$ and $(v, f) \in \mathcal{A} (N, \cdot, \Omega)$ and $(\mathcal{A} (N, \cdot, \Omega), \, \bullet)$ is a subgroup in the canonical semidirect product of groups
$(N, +) \ltimes {\rm GL}_k (N)$ as defined by \equref{semidirect}. Applying \thref{calssific} for $M = N$ and $(\cdot', \, \Omega') = (\cdot, \, \Omega)$ we obtain:

\begin{corollary} \colabel{autodesc}
Let $(N, \cdot)$ be a $4$-algebra and $\Omega \in {\mathcal BO} \, (N, \cdot)$ a Bernstein operator on $(N, \cdot)$. Then the map defined for any
$(v, f) \in \mathcal{A} \, (N, \cdot, \Omega)$ and $(x, \alpha) \in N\times k$ by:
$$
\vartheta : (\mathcal{A} \, (N, \cdot, \Omega), \, \bullet) \to {\rm Aut}_{\rm Ber} \, (N \ltimes_{(\cdot, \Omega)} k), \qquad
\vartheta (v, f) (x, \, \alpha) := \bigl (f(x) + \alpha\, v_, \,\, \alpha\bigl)
$$
is an isomorphism of groups.
\end{corollary}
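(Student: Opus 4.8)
The plan is to deduce everything from \thref{calssific} specialized to the case $W = V$, $\cdot' = \cdot$ and $\Omega' = \Omega$. In this situation the compatibility \equref{Mor1} asserts exactly that $f$ is an endomorphism of the algebra $(V, \cdot)$, while \equref{Mor2} and \equref{Mor3} become precisely the two conditions of \equref{auto} defining $\mathcal{A} (V, \cdot, \Omega)$, upon setting $v_0 = v$. So the real content is already packaged in the classification theorem, and the corollary is a matter of correctly restricting its bijection and checking compatibility with the group laws.

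First I would establish that $\vartheta$ is a well-defined bijection of sets. By \thref{calssific}, the pairs $(v_0, f) \in V \times {\rm Hom}_k (V, V)$ satisfying \equref{Mor1}--\equref{Mor3} are in bijection, via \equref{morfis}, with the morphisms of Bernstein algebras $V \ltimes_{(\cdot, \Omega)} k \to V \ltimes_{(\cdot, \Omega)} k$; moreover such a morphism is an isomorphism --- hence an automorphism --- exactly when $f$ is an isomorphism of the $4$-algebra $(V, \cdot)$. Once we impose $f \in {\rm Aut}_{\rm Alg} (V, \cdot)$, the condition \equref{Mor1} becomes automatic, so the surviving pairs are exactly the elements of $\mathcal{A} (V, \cdot, \Omega)$ and the surviving morphisms are exactly ${\rm Aut}_{\rm Ber} (V \ltimes_{(\cdot, \Omega)} k)$. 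Therefore the restriction of the bijection of \thref{calssific} is a bijection $\vartheta$ from $\mathcal{A} (V, \cdot, \Omega)$ onto ${\rm Aut}_{\rm Ber} (V \ltimes_{(\cdot, \Omega)} k)$, with $\vartheta (v, f)$ given by \equref{morfis} for $v_0 = v$, i.e. by the stated formula.

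It then remains to check that $\vartheta$ intertwines the operation $\bullet$ of \equref{autoinmu} on the source with composition of automorphisms on the target. I would verify this by direct evaluation: for $(w, g), (v, f) \in \mathcal{A} (V, \cdot, \Omega)$ and $(x, \alpha) \in V \times k$ we get $\bigl(\vartheta (w, g) \circ \vartheta (v, f)\bigr) (x, \alpha) = \vartheta (w, g) \bigl(f(x) + \alpha v, \alpha\bigr) = \bigl(g(f(x)) + \alpha (w + g(v)), \alpha\bigr)$, which is precisely $\vartheta \bigl(w + g(v), g \circ f\bigr) (x, \alpha) = \vartheta \bigl((w, g) \bullet (v, f)\bigr) (x, \alpha)$. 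Hence $\vartheta$ is a group homomorphism, and being bijective it is an isomorphism of groups. Since the group structure on $\mathcal{A} (V, \cdot, \Omega)$ and its realization as a subgroup of $(V, +) \ltimes {\rm GL}_k (V)$ have already been recorded before the statement, nothing further is needed there.

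I expect no serious obstacle: the entire weight of the argument is borne by \thref{calssific}. The only points that demand (minor) care are observing that \equref{Mor1} becomes vacuous once $f$ is required to be an algebra automorphism --- so that the parametrizing set is genuinely $\mathcal{A} (V, \cdot, \Omega)$ and not something larger --- and the bookkeeping in the homomorphism computation, where the decisive features are that the second coordinate is preserved, that the linear parts compose as $g \circ f$, and that the translation parts accumulate as $w + g(v)$, exactly matching \equref{semidirect} and \equref{autoinmu}.
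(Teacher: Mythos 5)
Your proof is correct and takes essentially the same approach as the paper: the paper obtains \coref{autodesc} precisely by applying \thref{calssific} with $W = V$ and $(\cdot', \, \Omega') = (\cdot, \, \Omega)$, identifying \equref{Mor1}--\equref{Mor3} (with $f$ invertible) with the definition \equref{auto} of $\mathcal{A} \, (V, \cdot, \Omega)$. Your explicit verification that $\vartheta$ intertwines $\bullet$ with composition is exactly the computation the paper compresses into the remark that $\mathcal{A} \, (V, \cdot, \Omega)$ ``can easily'' be seen to be a group under \equref{autoinmu}.
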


\begin{example} \exlabel{exauto}
Let $\mathcal{B}_0 \, (N, \, \Omega)$ be the trivial Bernstein algebra associated to an idempotent endomorphism $\Omega =\Omega^2$ of vector space $N$ as constructed in
\exref{exberdatum}. Then the automophism group ${\rm Aut}_{\rm Ber} \, \bigl(\mathcal{B}_0 \, (N, \, \Omega) \bigl)$ identifies with the subgroup of the
semidirect product $(N, +) \ltimes {\rm GL}_k (N)$ consisting of all pairs $(v, \, f) \in N \times {\rm GL}_k (N)$ satisfying the following two
compatibilities: $\Omega (v) = v$ and $f \circ \Omega = \Omega \circ f$. In particular, for the constant Bernstein algebra (resp. the unit Bernstein algebra) we obtain the following isomorphisms of groups:
$$
{\rm Aut}_{\rm Ber} \, \bigl(\mathcal{B}_0 \, (N, \, \Omega := 0 ) \bigl) \, \cong {\rm GL}_k (N), \quad
{\rm Aut}_{\rm Ber} \, \bigl(\mathcal{B}_0 \, (N, \, \Omega := {\rm Id}_N ) \bigl) \, \cong (N, +) \ltimes {\rm GL}_k (N)
$$
\end{example}

Now, we shall look at the classification problem for Bernstein algebras. For this purpose we introduce:

\begin{definition} \delabel{ecBO}
Let $(N, \cdot)$ be a $4$-algebra. Two Bernstein operators $\Omega$ and $\Omega' \in {\mathcal BO} \, (N, \cdot)$ on $(N, \cdot)$ are called
\emph{equivalent} and we denote this by $\Omega \approx \Omega'$ if there exists a pair $(v_0, \, f) \in N \times {\rm Aut}_{\rm Alg} (N, \cdot)$ such that
for any $x\in N$:
\begin{equation} \eqlabel{echividem}
\Omega' (x) = (f \circ \Omega \circ f^{-1}) (x) - 2\, x \cdot v_0, \qquad
\Omega' (v_0) = v_0 - v_0^2.
\end{equation}
\end{definition}

If follows from \thref{calssific} that $\Omega \approx \Omega'$ if and only if there exists an isomorphism of Bernstein algebras
$N \ltimes_{(\cdot, \, \Omega)} \, k \cong N \ltimes_{(\cdot, \, \Omega')} \, k$. Hence, $\approx$ is an equivalent relation of
the set ${\mathcal BO} \, (N, \cdot)$ and the quotient set is denoted by ${\mathcal BO} \, (N, \cdot)/ \approx$. The set of types
of isomorphisms of $4$-algebras of a given (possibly infinite) dimension $\mathfrak{c}$ will be denoted by ${\rm Types} \, (4, \mathfrak{c})$.
As a conclusion of the results of this paper we obtain the following classification result:

\begin{theorem} \thlabel{clasifgen}
Let $\mathfrak{c}$ be a given cardinal and ${\rm Bernstein} \, (1 + \mathfrak{c})$ the set of types of isomorphism of all Bernstein algebras of dimension $1 + \mathfrak{c}$.
Then the map:
\begin{equation} \eqlabel{clasifgen}
\xi : \amalg_{(N, \cdot)\in {\rm Types} \, (4, \mathfrak{c})} \, {\mathcal BO} \, (N, \cdot) /\approx \,\, \to {\rm Bernstein} \, (1 + \mathfrak{c}), \quad
\xi \bigl( \overline{\Omega} \bigl) \, := N \ltimes_{(\cdot, \, \Omega)} \, k
\end{equation}
is bijective, where the coproduct in the left hand side is taken over all $4$-algebras $(N, \cdot)\in {\rm Types} \, (4, \mathfrak{c})$ and $\overline{\Omega}$ is the equivalent class of $\Omega$ via the equivalent relation given by \equref{echividem}.
\end{theorem}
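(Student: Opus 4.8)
The plan is to read off the bijectivity of $\xi$ from the two structural theorems already established, exploiting the two-layered indexing of the domain: the outer coproduct records the isomorphism type of the underlying $4$-algebra, and the inner quotient by $\approx$ records the Bernstein operator up to the equivalence \equref{echividem}. First I would dispatch well-definedness. An element of the domain is a class $\ol{\Omega}$ with $\Omega \in {\mathcal BO}(V, \cdot)$ for a fixed representative $(V, \cdot) \in {\rm Types}(4, \mathfrak{c})$, and by the observation following \deref{ecBO} --- itself an immediate consequence of \thref{calssific} --- the relation $\Omega \approx \Omega'$ already yields an isomorphism $V \ltimes_{(\cdot, \Omega)} k \cong V \ltimes_{(\cdot, \Omega')} k$; hence $\xi(\ol{\Omega})$ does not depend on the representative $\Omega$ of the class.

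For surjectivity, I would start from an arbitrary Bernstein algebra $A$ of dimension $1 + \mathfrak{c}$ and apply \thref{structura} to write $A \cong {\rm Ker}(\omega) \ltimes_{(\cdot, \Omega)} k$, where ${\rm Ker}(\omega)$ carries a $4$-algebra structure and $\Omega$ is a Bernstein operator on it; as ${\rm Ker}(\omega)$ has codimension $1$ in $A$, its dimension is $\mathfrak{c}$. Choosing the representative $(V, \cdot) \in {\rm Types}(4, \mathfrak{c})$ of its isomorphism type together with a $4$-algebra isomorphism $g: ({\rm Ker}(\omega), \cdot) \to (V, \cdot)$, I would set $\Omega_0 := g \circ \Omega \circ g^{-1}$ and check that $\Omega_0$ is again idempotent and satisfies \equref{berdat}, so that $\Omega_0 \in {\mathcal BO}(V, \cdot)$. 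Feeding the pair $(v_0, f) := (0, g)$ into \thref{calssific} --- for which \equref{Mor1}--\equref{Mor3} hold by construction --- produces an isomorphism $A \cong V \ltimes_{(\cdot, \Omega_0)} k = \xi(\ol{\Omega_0})$.

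For injectivity, suppose $\xi(\ol{\Omega}) \cong \xi(\ol{\Omega'})$ with $\Omega \in {\mathcal BO}(V, \cdot)$, $\Omega' \in {\mathcal BO}(W, \cdot')$ and $(V, \cdot), (W, \cdot') \in {\rm Types}(4, \mathfrak{c})$. The final assertion of \thref{calssific} guarantees that an isomorphism $V \ltimes_{(\cdot, \Omega)} k \cong W \ltimes_{(\cdot', \Omega')} k$ can exist only if $(V, \cdot)$ and $(W, \cdot')$ are isomorphic as $4$-algebras; since both are drawn from the same fixed system of representatives, this forces $(V, \cdot) = (W, \cdot')$, placing the two classes in the same component of the coproduct. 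Inside that component, the equivalence after \deref{ecBO} converts the isomorphism $V \ltimes_{(\cdot, \Omega)} k \cong V \ltimes_{(\cdot, \Omega')} k$ into $\Omega \approx \Omega'$, i.e. $\ol{\Omega} = \ol{\Omega'}$, so the two domain elements coincide.

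Most of this is bookkeeping resting on \thref{structura} and \thref{calssific}; the only computation I expect to need is the stability of ${\mathcal BO}$ under conjugation by a $4$-algebra isomorphism, namely that $\Omega_0 = g \circ \Omega \circ g^{-1}$ inherits \equref{berdat} from $\Omega$, and this is the step that makes the surjectivity argument land inside the fixed representative system. The main conceptual point --- and the reason the parametrization is clean --- is the separation of invariants furnished by \thref{calssific}: the $4$-algebra type is an isomorphism invariant of the ambient Bernstein algebra, and once that type is fixed the class $\ol{\Omega}$ is a complete invariant.
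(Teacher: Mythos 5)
Your proposal is correct and follows exactly the route the paper intends: the theorem is stated there as a direct consequence of \thref{structura} (surjectivity), \thref{calssific} together with the remark after \deref{ecBO} (injectivity and well-definedness), and your write-up assembles these pieces in the same way. The one detail you supply that the paper leaves implicit --- verifying that $\Omega_0 = g \circ \Omega \circ g^{-1}$ inherits idempotency and the compatibilities \equref{berdat} under a $4$-algebra isomorphism $g$, so that the argument lands in the fixed system of representatives ${\rm Types}\,(4, \mathfrak{c})$ --- is checked correctly and is indeed the only computation needed.
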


\begin{proof}
According to \thref{structura} we have that any Bernstein algebra of dimension $1 + \mathfrak{c}$ is isomorphic to
a semidirect product $N \ltimes_{(\cdot, \, \Omega)} \, k$, where $N = (N, \, \cdot)$ is a $\mathfrak{c}$-dimensional $4$-algebras and $\Omega$ is
a Bernstein operator on $N = (N, \, \cdot)$. It follows from \thref{calssific} that two non-isomorphic $4$-algebras
$(N, \cdot)$ and $(M, \cdot')$ of dimension $\mathfrak{c}$ give non-isomorphic Bernstein algebras
$N \ltimes_{(\cdot, \, \Omega)} \, k$ and $M \ltimes_{(\cdot', \, \Omega')} \, k$ - the coproduct from the left hand side of \equref{clasifgen} arise from this remark.
Hence, we can fix a $4$-algebra $(N, \cdot)$ of dimension $\mathfrak{c}$. Let $\Omega$ and $\Omega' \in {\mathcal BO} \, (N, \cdot)$ be two Bernstein operators on $(N, \cdot)$. Then, applying once again \thref{calssific}, we obtain that there exists
an isomorphism of Bernstein algebras $N \ltimes_{(\cdot, \, \Omega)} \, k \cong N \ltimes_{(\cdot, \, \Omega')} \, k$
if and only if $\Omega \approx \Omega'$ as introduced in \deref{ecBO} and this finishes the proof. We just mention that, for a given $4$-algebra $(N, \cdot)$,
the quotient set ${\mathcal BO} \, (N, \cdot) /\approx$ classifies all Bernstein algebras $(A, \omega)$ such that ${\rm Ker} (\omega) \cong (N, \cdot)$.
\end{proof}

\begin{remark} \relabel{clasLub}
The similiar classification result holds true for normal Bernstein algebras: the only change that has to be made in \thref{clasifgen} is the following:
the set ${\mathcal BO} \, (N, \cdot)$ has to be replaced by the set ${\mathcal NBO} \, (N, \cdot)$ of all normal Bernstein operators on $(N, \cdot)$.
\end{remark}

Computing the classifying object given by \equref{clasifgen} of \thref{clasifgen} is a very difficult problem. Among all components of the coproduct
$\amalg_{(N, \cdot)\in {\rm Types} \, (4, \mathfrak{c})} \, {\mathcal BO} \, (N, \cdot) /\approx$ the simplest is the one corresponding to  the
abelian algebra structure on $N$, $x \cdot y = 0$, for all $x$, $y\in N$. In this case, ${\mathcal BO} \, (N, \cdot := 0 ) /\approx$ is described below
and it classifies all Bernstein algebras $\mathcal{B}_0 \, (N, \, \Omega)$ as constructed in \exref{exberdatum}.

\begin{example} \exlabel{clasif1}
Let $N$ be a vector space, $\Omega = \Omega^2$ and $\Omega' = \Omega'^2 \in {\rm End}_k (N)$ two idempotents endomorphisms of $N$. Then, the Bernstein algebras
$\mathcal{B}_0 \, (N, \, \Omega)$ and $\mathcal{B}_0 \, (N, \, \Omega')$  are isomorphic if and only if $\Omega$ and $\Omega'$ are similar endomorphisms of $N$, i.e. there exists $f\in {\rm GL}_k (N)$ such that $\Omega' = f \circ \Omega \circ f^{-1}$. In particular, if ${\rm dim}_k (N) = n$, then
$$
{\mathcal BO} \, (N, \cdot := 0 ) /\approx \,\, = \{ \overline{0}, \,\, \overline{\Omega_i} \, | \, i= 1,\cdots, n  \}
$$
where, for any $i = 1, \cdots, n$, $\Omega_i$ is an endomorphism of $N$ whose canonical Jordan form is the $n\times n$-matrix $e_{11} + \cdots + e_{ii}$; we denoted by $e_{ij}$ the canonical basis of the matrix algebra ${\rm M}_n (k)$. In particular, we obtain that there exist precisely $n+1$ types of isomorphism of Bernstein algebras
of dimension $n+1$ whose bar-ideal is an abelian algebra. These are the Bernstein algebras $A_0$, $A_1$, ... $A_n$, where for any $t = 0, \cdots, n$, $A_t$ is the algebra having $\{f, \, e_i \, | \, i= 1, \cdots n\}$ as a basis and the multiplication $\circ$ is given for any $i = 1, \cdots, n$ by:
\begin{equation}\eqlabel{detaliat}
f^2 := f, \qquad e_i \circ f = f \circ e_i := \frac{1}{2} \, \Omega_t (e_i)
\end{equation}
where $\Omega_0 :=0$ and $\Omega_t := e_{11} + \cdots + e_{tt}$, for all $t=1, \cdots, n$ (undefined multiplications on the elements of a basis are all zero)\footnote{For a similar result, using a completely different aproach, see \cite[Section 2]{WB2}.}. Furthermore, the automorphisms group of the Bernstein algebras $A_t$ can be easily described by applying \coref{autodesc}: ${\rm Aut}_{\rm Ber} \, (A_t)$ identifies with the set of all pairs $(v, \, f) \in N \times {\rm GL}_k (N)$ such that $\Omega_t (v) = v$ and $f \circ \Omega_t = \Omega_t \circ f$, for all $t = 0, \cdots, n$.

Indeed, the Bernstein operators for the abelian algebra structure on $N$ are precisely the idempotent endomorphisms
of $N$, i.e. ${\mathcal BO} \, (N, \cdot := 0) = {\rm Idem} \, \bigl({\rm End}_k (N)\bigl)$. The equivalent relations \equref{echividem} come down to the following: $\Omega \approx \Omega'$ if and only if there exists a pair $(v_0, \, f) \in N \times {\rm GL}_k (N)$ such that $\Omega' (v_0) = v_0$ and $\Omega' = f \circ \Omega \circ f^{-1}$ (that is $\Omega$ and $\Omega'$ are similar endomorhisms of $N$). Hence the conclusion follows (for the converse we just take $v_0 := 0$). The last statement follows from the classical Jordan theory: the canonical Jordan form of an idempotent endomorphism has one of the forms described in the statement
since the minimal polynomial of such an idempotent endomorphism is a divisor of $X^2 - X$.
\end{example}

We give now an example of computing the classifying object ${\mathcal BO} \, (N, \cdot ) /\approx $ for a non-abelian $4$-algebra $(N, \cdot )$. Linear endomorphisms of a finite dimensional vector space $N$ will be written as matrices and undefined multiplications on the elements of a basis are all zero. The computations being very long, we will indicate only the essential steps, details being given upon request.

\begin{example} \exlabel{exnetri}
Let $(N, \cdot )$ be the $4$-algebra with a basis $\{e_1, \, e_2, \, e_3\}$ and the multiplication $e_1 \cdot e_2 = e_2 \cdot e_1 := e_1$. Then
$$
{\mathcal BO} \, (N, \cdot ) /\approx \,\, = \{ \overline{\Omega_1},  \,\,  \overline{\Omega_2}, \,\,  \overline{\Omega_2} \}
$$
where $\Omega_1$, $\Omega_2$, $\Omega_3$ are the following Bernstein operators on $N$:
\begin{equation*}\eqlabel{matrV}
\Omega_1 = \begin{pmatrix} 1 & 0 & 0  \\
0 & 0 & 0 \\
0 & 0 & 1
\end{pmatrix}, \quad
\Omega_2 = \begin{pmatrix} 1 & 0 & 0  \\
0 & 0 & 0 \\
0 & 0 & 0
\end{pmatrix}, \quad
\Omega_3 = \begin{pmatrix} 1 & 0 & 0  \\
0 & 0 & 0 \\
1 & 0 & 0
\end{pmatrix}
\end{equation*}
Hence, any $4$-dimensional Bernstein algebra $(A, \omega)$ such that ${\rm Ker} (\omega) \cong (N, \cdot)$ is isomorphic to one of the following three
algebras with the basis $\{f, \, e_1, \, e_2, \, e_3\}$ and the multiplications given by:
\begin{eqnarray*}
&N \ltimes_{(\cdot, \, \Omega_1)} \, k: & \, f^2 = f, \,\, e_1 \cdot e_2 = e_1, \,\,  e_1 \cdot f = \frac{1}{2}\, e_1, \,\,  e_3 \cdot f = \frac{1}{2}\, e_3,  \\
&N \ltimes_{(\cdot, \, \Omega_2)} \, k: & \, f^2 = f, \,\, e_1 \cdot e_2 = e_1, \,\,  e_1 \cdot f = \frac{1}{2}\, e_1, \\
&N \ltimes_{(\cdot, \, \Omega_3)} \, k: & \, f^2 = f, \,\, e_1 \cdot e_2 = e_1, \,\,  e_1 \cdot f = \frac{1}{2}\, e_1, \,\,  e_3 \cdot f = \frac{1}{2}\, e_1
\end{eqnarray*}

Indeed, by a long but straightforward computation we obtain that the set of all Bernstein operators on $(N, \cdot)$ is
${\mathcal BO} \, (N, \cdot ) = \{ \Omega_{\alpha, \beta, 1}, \,\, \Omega_{\gamma, \delta} \, |\, \alpha, \beta, \gamma, \delta \in k \}$ where:
\begin{equation*}
\Omega_{\alpha, \beta, 1} = \begin{pmatrix} 1 & 0 & 0  \\
\alpha & 0 & \beta \\
0 & 0 & 1
\end{pmatrix}, \quad
\Omega_{\gamma, \delta} = \begin{pmatrix} 1 & 0 & 0  \\
\gamma & 0 & 0 \\
\delta & 0 & 0
\end{pmatrix}
\end{equation*}
In the second step we have to compute the automorphism group ${\rm Aut}_{\rm Alg} (N, \cdot)$ of the $4$-algebra $(N, \cdot)$. Again, by a straightforward computation we obtain that ${\rm Aut}_{\rm Alg} (N, \cdot)$ is the subgroup of ${\rm GL} (3, k)$ consisting of all invertible $3\times 3$-matrices of the form:
\begin{equation} \eqlabel{auto3}
{\rm Aut}_{\rm Alg} (N, \cdot) \, \cong \,
\{ \begin{pmatrix} a & 0 & 0  \\
0 & 1 & b \\
0 & 0 & c
\end{pmatrix} \, |\, a, c \in k^*, \, b\in k    \}.
\end{equation}
Finally, in the last step it remains to classify all Bernstein operators via the equivalent relation \equref{echividem}. By a long but
routine computation we will obtain that $\Omega_{\alpha, \beta, 1} \approx \Omega_{0, 0, 1}$, for all $\alpha$, $\beta\in k$ and for any $\gamma$, $\delta \in k$ we have that $\Omega_{\gamma, \delta}$ is equivalent either with $\Omega_{0, 0}$ or with $\Omega_{0, 1}$. The proof is finished once we observe that any two Bernstein operators $\Omega_{0, 0, 1}$, $\Omega_{0, 0}$ and $\Omega_{0, 1}$ are not equivalent.

The automorphisms groups of the above three Bernstein algebras can be also described using \coref{autodesc}. A few computations give that
${\rm Aut}_{\rm Ber} \, (N \ltimes_{(\cdot, \, \Omega_1)} \, k ) \cong k \times k^* \times k^*$, the group with the multiplication given by:
$$
(v, \, a, \, c) \bullet (v', \, a', \, c') := (v + v' c, \, aa', cc')
$$
for all $v$, $v'\in k$ and $a$, $c$, $a'$, $c'\in k^*$. In a similar way, ${\rm Aut}_{\rm Ber} \, (N \ltimes_{(\cdot, \, \Omega_2)} \, k ) \cong
{\rm Aut}_{\rm Alg} \, (N, \cdot)$, the group of all invertible $3\times 3$-matrices of the form \equref{auto3}. Finally, we have an isomporhism of groups
${\rm Aut}_{\rm Ber} \, (N \ltimes_{(\cdot, \, \Omega_3)} \, k ) \cong k\times k^*$, the group with the multiplication given by:
$$
(v, \, a) \bullet (v', \, a') := (v + v' a, \, aa')
$$
for all $v$, $v'\in k$ and $a$, $a'\in k^*$.
\end{example}

\begin{remark} \relabel{insistareferendul}
The Bernstein algebras $N \ltimes_{(\cdot, \, \Omega_i)} \, k$, $i = 1, 2, 3$ as constructed in \exref{exnetri} are, up to an isomorphism, the only $4$-dimensional Bernstein algebras whose bar-ideal is the $4$-algebra $N$ as given there. On the other hand, as we mention in the introduction, the Bernstein algebras of dimension $4$ were classified
over an arbitrary field of characteristic $\neq 2$ in \cite[Theorem, pg.1431]{corte} using completely different methods. Our examples of Bernstein algebras fit the classification. Indeed, the first algebra $N \ltimes_{(\cdot, \, \Omega_1)} \, k$ is of type $(3, 1)$ in the Peirce decomposition \cite[Theorem, pg.1431]{corte} since we can easily prove that (with the notations of \cite{corte}) $V_f = k e_2$ and $U_f = k e_1 + k e_3$. Thus, $N \ltimes_{(\cdot, \, \Omega_1)} \, k$ belongs to the fourth family of Bernstein algebras as listed in \cite[Theorem, pg.1432, Table II]{corte}. In the same fashion we can prove that $N \ltimes_{(\cdot, \, \Omega_2)} \, k$ is a $4$-dimensional Bernstein algebra of type $(2, 2)$ since $V_f = k e_2 + k e_3$ and $U_f = k e_1$. Thus, $N \ltimes_{(\cdot, \, \Omega_2)} \, k$ appears as a special case of the third type of Bernstein algebras listed in \cite[Theorem, pg.1432, Table I]{corte}. The details are left to the reader. 
\end{remark}

\begin{remark} \relabel{clasmicdim}
Up to an isomorphism there exist two types of $2$-dimensional Bernstein algebras, namely the ones with the basis $\{e_1, \, f \}$ and the multiplication given by:
\begin{eqnarray*}
&A_0:& f^2 = f,\\
&A_1:& f^2 = f, \quad e_1 \circ f = f \circ e_1 = \frac{1}{2} e_1
\end{eqnarray*}
Indeed, the bar-ideal of a $2$-dimensional Bernstein algebra is an $1$-dimensional $4$-algebra $N = ke_1$ with the abelian multiplication, i.e. $e_1 \cdot e_1 = 0$. The conclusion follows from \exref{clasif1} and we recover the classification originally given by Bernstein \cite{bern2, bern3}.

Now, if we take a step further, any $3$-dimensional Bernstein algebra has a $2$-dimensional bar-ideal $N$. We have shown in \cite[Example 1.1]{mil22} that, up to an isomorphism,
there are exactly three $2$-dimensional $4$-algebras, namely the abelian one $N_0$ and the algebras with the basis $\{e_1, \, e_2\}$ and the multiplication $\cdot$ given by:
\begin{eqnarray*}
&N_1:& e_1^2 = e_2, \quad e_1 \cdot e_2 = e_2^2 = 0,\\
&N_2:& e_1 \cdot e_2 = e_2, \quad e_1^2 = e_2^2 = 0.
\end{eqnarray*}
It follows from \exref{clasif1} that there exist exactly three types of isomorphisms of $3$-dimensional Bernstein algebras whose bar-ideal is isomorphic
to $N_0$, namely the algebras $A_0$, $A_1$ and $A_2$ with the basis $\{e_1, \, e_2, \, f\}$ and the multiplication given by:
\begin{eqnarray*}
&A_0:& f^2 = f, \\
&A_1:& f^2 = f, \quad e_1 \circ f = f \circ e_1 = \frac{1}{2} e_1,\\
&A_2:& f^2 = f, \quad e_1 \circ f = f \circ e_1 = \frac{1}{2} e_1, \quad e_2 \circ f = f \circ e_2 = \frac{1}{2} e_2.
\end{eqnarray*}
The cases in which the bar-ideal of $3$-dimensional Bernstein algebras is $N_1$ and $N_2$ are treated similarly to \exref{exnetri} and are left to the reader. Thus, the classification of all $3$-dimensional Bernstein algebras, as proved by Holgate \cite[Section 4]{Hol} over $\CC$, is recovered.
\end{remark}

\subsection*{Conclusions and open problems} Let $n$ be a positive integer and $\{e_i \, | \, i = 1, \cdots, n\}$ the canonical basis of the vector
space $N:= k^n$. We have proved that any $(1+n)$-dimensional Bernstein algebra is isomorphic to
the algebra having $\{f, \, e_i \, | \, i = 1, \cdots, n\}$ as a basis and the multiplication $\circ$ given for any $i$, $j = 1, \cdots, n$ by:
\begin{equation}\eqlabel{detaliat0}
f^2 := f, \qquad e_i \circ e_j := e_i \cdot e_j, \qquad e_i \circ f = f \circ e_i := \frac{1}{2} \, \Omega (e_i)
\end{equation}
where $\cdot$ is a $4$-algebra stucture on $N : = k^n$ and $\Omega = \Omega^2 \in {\End}_k (k^n) \cong {\rm M}_n (k)$ is an idempotent endomorphism of $k^n$ satisfying the following compatibilities for any $x\in k^n$:
\begin{equation*} \eqlabel{berdat0}
x^2 \cdot \Omega (x) = 0, \qquad \Omega(x)^2 + \Omega(x^2) = x^2.
\end{equation*}
We also proved that the set of types of isomorphisms of all Bernstein algebras of dimension $1 + n$ is parameterized by the coproduct
$\amalg_{(k^n, \cdot)\in {\rm Types} \, (4, n)} \, {\mathcal BO} \, (k^n, \cdot) /\approx $. Thus, for a complete solution of the classification of all Bernstein algebras of a given dimension $n$, the following steps (which are subsequent open problems) must be followed:

\textbf{Question 1:} \emph{For a given positive integer $n$, describe and classify, up to an isomorphism, all $4$-algebras of dimension $n$. For a given $4$-algebra structure $\cdot$ on $k^n$:}

\emph{(i) Describe the set ${\mathcal BO} \, (k^n, \cdot)$ of all Bernstein operators on $(k^n, \cdot)$.}

\emph{(ii) Describe explicitly the automorphisms groups ${\rm Aut}_{\rm Alg} (k^n, \cdot)$ of $(k^n, \cdot)$. }

First steps on the classification of $4$-algebras were taken recently in \cite{mil22}. In this context, we also recall the conjecture formulated by Guzzo-Benh \cite{guzzo} that was proved in the affirmative sense up to dimension $7$: \emph{is any finite dimensional $4$-algebra solvable?}

The last step is the most interesting one: in order to compute the classifying object ${\mathcal BO} \, (k^n, \cdot) /\approx $ a new and generalized Jordan type theory must be developed. We called it \emph{$\cdot$-Jordan theory} and it has the following statement: let $(k^n, \cdot)$ be a given $4$-algebra. Two endomorphisms
$\Omega$ and $\Omega' \in {\rm End}_k (k^n)$ are called \emph{$\cdot$-similar}, and we denote this by $\Omega \approx^{\cdot} \, \Omega'$, if there exists $f \in {\rm Aut}_{\rm Alg} (k^n, \cdot)$ an automorphism of $(k^n, \cdot)$ and an element $v_0 \in k^n$ such that
$$
\Omega' (x) = (f \circ \Omega \circ f^{-1}) (x) - 2\, x \cdot v_0
$$
for all $x\in k^n$. We observe that for the trivial multiplication ($x \cdot y := 0$, for all $x$, $y\in k^n$) on $k^n$ the above equivalent relation is just the usual relation of similarity that appears in classical Jordan theory. We also mention that we left aside the last part of \equref{echividem} (namely $\Omega' (v_0) = v_0 - v_0^2$) since it is a normalizing type condition.

\textbf{Question 2:} \emph{For a given $4$-algebra structure $\cdot$ on $k^n$ describe explicitly a system of representatives for the above $\cdot$-similar relation.}

Of course, for the Bernstein problem in the last question it is enough to restrict to a system of representatives among all idempotent endomorphism of $k^n$. Finally, related to this step we ask the following question, the answer of which we intuitively expect to be affirmative:

\textbf{Question 3:} \emph{Let $\cdot$ be a given $4$-algebra structure on $k^n$. Is the set ${\mathcal BO} \, (k^n, \cdot) /\approx$ non-empty and does it have at most $n+1$ elements?}

\textbf{Acknowledgements.} The author thanks the referee for the helpful suggestions that improve the paper.

\end{document}